\numberwithin{equation}{section}
\tikzset{
  treenode/.style = {shape=rectangle, rounded corners,
                     draw, align=center,
                     top color=white,
                     bottom color=blue!10},
  root/.style     = {treenode, font=\Large,
                     bottom color=blue!30},
  env/.style      = {treenode, font=\ttfamily\normalsize},
  dummy/.style    = {circle,draw}
}
\tiny\color{gray},
\definecolor{VerdeOlivo}{rgb}{0.3,0.5,0.1}
\definecolor{Magenta}{rgb}{.65,0.15,.2}
\definecolor{Gris}{gray}{0.3}
\newtheorem{Theorem}{Theorem}[section] 
\newtheorem{Definition}[Theorem]{Definition}
\newtheorem{Lemma}[Theorem]{Lemma} 
\newtheorem{Remark}[Theorem]{Remark}
\newtheorem{Example}[Theorem]{Example}
\newtheorem{Algorithm}[Theorem]{Algorithm}
\theoremstyle{definition}
\newcommand{\bff}[1]{{\bf #1}}
\begin{document}
	
	
\title[Arithmetical structures on dominated polynomials]{Arithmetical structures on dominated polynomials}
	
	
\author{Carlos E. Valencia}
\email[C. E. ~Valencia]{cvalencia@math.cinvestav.edu.mx, cvalencia75@gmail.com}
\author{Ralihe R. Villagr\'an}
\email[R. R. ~Villagr\'{a}n]{rvillagran@math.cinvestav.mx, ralihemath@gmail.com }
\thanks{Carlos E. Valencia was partially supported by
SNI and Ralihe R. Villagr\'an by CONACYT}
\address{
Departamento de Matem\'aticas\\
Centro de Investigaci\'on y de Estudios Avanzados del IPN\\
Apartado Postal 14--740 \\
07000 Mexico City, D.F. 
} 
	
	
\maketitle

\begin{abstract} 
In~\cite{algorithmic} was given an algorithm that computes arithmetical structures on matrices.
We use some of the ideas contained there to get an algorithm that computes arithmetical structures over dominated polynomials.
A dominated polynomial is an integer multivariate polynomial such that contains a monomial which is divided by all its monomials.
\end{abstract}

{\small \textbf{Keywords:} Dominated polynomials, Arithmetical structures, Diophantine equation, Hilbert's tenth problem.}

{\small \textbf{AMS Mathematical Subject Classification 2020:} Primary 11D72,11Y50; Secondary 11C20,15B48}

	
\section{Introduction}\label{intro}
Arithmetical structures on matrices was introduced in 2018 by Corrales and Valencia in~\cite{arithmetical}.
Were was proved that arithmetical structures on irreducible matrices are finite.
Recently arithmetical structures aroused some interest, see for instance~\cite{ PathsCycles, connectivityone, double, bident, extremal}.
In~\cite{algorithmic} was discussed some algorithmic aspects of arithmetical structures on matrices.

The main goal of this article is to generalizes the concept of arithmetical structure in the context of dominated polynomials and we get an algorithm to computes arithmetical structures on dominated polynomials. 
A dominated polynomial is an integer multivariate polynomial such that contains a monomial which is divided by all its monomials.
To finish we give an example how works the algorithm on a polynomial that is not the determinant of an integer matrix. 

We recall what it means of an arithmetical structure on a matrix. 
Given a non-negative integer matrix $L$ with zero diagonal (for instance the adjacency matrix of a graph), a pair $(\mathbf{d},\mathbf{r})\in \mathbb{N}_+^n\times \mathbb{N}_+^n$ is called an arithmetical structure of $L$ if 
\[
(\textrm{Diag}(\mathbf{d})-L)\mathbf{r}^t=\mathbf{0}^t\text{  and }\gcd(r_1,\ldots,r_n)=1.
\]


It is not difficult to check that the vector $\bff{d}$ 
is a solution of the polynomial Diophantine equation
\[
f_L(X):=\det(\textrm{Diag}(\mathbf{X})-L)=0.
\]

Therefore computing arithmetical structures on matrices consist on computing a subset of the solutions of a very special class of Diophantine equations, those whose polynomial is the determinant of a matrix with variables in the diagonal.

Throughout this article we use the usual partial order over $\mathbb{R}^n$ given by $\mathbf{a}\leq\mathbf{b}$ if and only if $a_i\leq b_i$ for all $i=1,\ldots,n$ and $\mathbf{a}$,$\mathbf{b}\in\mathbb{R}^n$. 
In a similar way, $\mathbf{a}<\mathbf{b}$ if and only if $\mathbf{a}\leq\mathbf{b}$ and $\mathbf{a}\neq\mathbf{b}$.
It is well known that this is a well partial order over $\mathbb{N}^n$. 


\section{Arithmetical structures on dominated polynomials.}\label{poly}
We begin by defining what is a dominated polynomial.
After that, we generalize Algorithms~\cite[3.2 and 3.4]{algorithmic} given for polynomials which are the determinant of a matrix with variables in the diagonal to dominated polynomials.
Some concepts are preserved in this new setting and others are not.
For instance, the concept of $d$-arithmetical structure is generalized easily.
However, this not happen in the case of the $r$-arithmetical structure.


\subsection{Dominated polynomials}
Given a polynomial $f\in \mathbb{Z}[\bff{x}]$, let $\mathcal{M}_f$ be its set of monomials with non zero coefficient.
A monomial $p\in\mathcal{M}_f$ is called dominant whenever is divided by every monomial in $\mathcal{M}_f$.

\begin{Definition}\label{dominated}
If $\mathcal{M}_f$ has a dominant monomial, then $f$ is called dominated.
\end{Definition} 

It is not difficult to check that if $\mathcal{M}_f$ has a dominant monomial, then it is unique.
Thus, let $p_f$ be the dominant monomial of a dominated polynomial $f$. 
We are interested when $f$ is square-free and $x_1\cdots x_n$ is its dominant monomial. 

Now we are ready to define a $d$-arithmetical structure of an irreducible square-free dominated polynomial. 

\begin{Definition}
Given a polynomial $f$ with its leading coefficient positive, an arithmetical structure of $f$ is a vector $\bff{d} \in \mathbb{N}_+^n$ such that $f(\bff{d})=0$ and all the non-constant coefficients of $f_{\bff{d}}(X):=f(X+\bff{d})$ are positive.
\end{Definition}

Note that if $f$ does not have its leading coefficient positive, then it does not have any arithmetical structures.
However, since either $f$ or $-f$ has its leading coefficient positive, then we can assume that $f$ has positive leading coefficient.
From here on out, let us assume that the leading coefficient is always positive unless the contrary is stated.

If a dominated square-free polynomial $f$ is reducible, that is, $f=\prod_{i=1}^sf_i$ for some irreducible square-free polynomials $f_i$, then each $f_i$ is a dominated polynomial.
Moreover, if $\bff{d}(f_i)$ is the vector with the entries of $\bff{d}$ that corresponds to the variables of $f_i$, then $\bff{d}$ is an arithmetical structure of $f$ if and only if $\bff{d}(f_i)$ is an arithmetical structure of at least one of the $f_i$ and the non-constant coefficients of $f_{i,{\bff{d}(f_i)}}(X)$ are positive and the constant coefficient is non-negative for all $i$.
Thus, if $f$ is reducible square-free polynomial, then it has an infinite number of arithmetical structures.

\begin{Definition}
Given a square-free dominated polynomial $f$ on $n$ variables, let
\[
\mathcal{D}(f)=\{ \bff{d}\in\mathbb{N}^n_+\, | \, \bff{d}\text{ is an arithmetical structure of } f\}.
\]
\end{Definition}

This definition generalizes the one given in~\cite[Section 2]{algorithmic}. 
More precisely, if $L$ is a non-negative matrix with zero diagonal, then $\mathcal{D}(L)=\mathcal{D}(f_L)$ where $f_L=\mathrm{det}(\mathrm{Diag}(\bff{x})-L)$.

Now, let $\mathcal{D}_{\geq 0}(f)=\Big\{ \mathbf{d}\in \mathbb{N}^n_+ \Big| \text{ all non-constant coefficients of }f_{\bff{d}}(X) \text{ positive and }f(\mathbf{d})\geq 0  \Big\}$.

\subsection{The algorithm for the dominated polynomial case}\label{polycase}
We extend Algorithms~\cite[3.2 and 3.4]{algorithmic} to find arithmetical structures on a square-free irreducible dominated polynomial with integer coefficients. 

If $f\in \mathbb{Z}[X]$ and all nonconstant coefficients of $f$ are positive, let 
\[
\mathcal{C}(f)=\{\mathbf{d}\in \mathbb{N}_+^n\, |\ f(X+\mathbf{d})\in \mathcal{D}_{\geq 0}(f) \}.
\] 

Now let $\min \mathcal{D}_{\geq 0}(f)$ be the set of all minimal elements of $\mathcal{D}_{\geq 0}(f)$. It is not difficult to check that $\min\mathcal{C}(f)$ exists and is finite by Dickson's Lemma. 
Also, for any $\mathbf{d}\in \mathbb{Z}^{n-1}$ and $1\leq s\leq n$, let $\mathbf{d}^{(s)}\in \mathbb{Z}^{n}$ be given by
\begin{equation} \label{expand}
(\mathbf{d}^{(s)})_i=
\begin{cases}
\mathbf{d}_i & \text{if } 1\leq i< s,\\
1 & \text{    if } i=s,\\
\mathbf{d}_{i-1} & \text{ if } s < i \leq n.
\end{cases}
\end{equation}

\begin{Algorithm}\label{A1Pol}
\mbox{}
\noindent\hrulefill

\textbf{Input}: An irreducible square-free dominated polynomial $f$ over $\mathbb{Z}$.

\textbf{Output}: $\min\mathcal{D}_{\geq 0}(f)$ and $\mathcal{D}(f)$.
\begin{enumerate}[\hspace{1pt}(1)]
\item \hspace{0mm} Let $\partial_s f=\dfrac{\partial f}{\partial x_s}$ for all $1 \leq s \leq n$.
\item \hspace{0mm} Compute $\tilde{A}_s=\min \mathcal{D}_{\geq 0}(\partial_s f)$ for all $1 \leq s \leq n$.
\vspace{0.7mm}
\item \hspace{0mm} Let $A_s=\{\mathbf{\tilde{d}}^{(s)}\,|\,\mathbf{\tilde{d}}\in \tilde{A}_s\}$.
\vspace{1mm}
\item \hspace{0mm} \textbf{For} $\boldsymbol{\delta}$ in $\prod_{s=1}^n A_s$:
\vspace{2.25mm}
\item \hspace{4mm} $\mathbf{d}=\sup\{\boldsymbol{\delta}_1,\boldsymbol{\delta}_2\ldots, \boldsymbol{\delta}_n \}$.
\vspace{0.8mm}
\item \hspace{4mm} Let $S=\{s\ |\ \mathrm{coef}_{\mathbf{d}}(x_s) = 0 \}$ 

\item \hspace{4mm} \textbf{If} $|S| = 0$:
\item \hspace{8mm} \textbf{For} $\mathbf{d^{*}} \in \min \mathcal{C}(\mathrm{Diag}(f(X+\mathbf{d}))$: 
\item \hspace{12mm} ``Add" $\mathbf{d^{*}}+\mathbf{d}$ to $\min\mathcal{D}_{\geq 0}(f)$.

\item \hspace{4mm} \textbf{If} $|S| \geq 1$:
\item \hspace{8mm}
\textbf{For} $t\notin S$:
\item \hspace{12mm} Make $\mathbf{d}_t^{'}=\mathbf{d}_t+1$, $\mathbf{d}^{'}_r =\mathbf{d}_r$ for all $r\in [n]\setminus \{t\}$ 
\item \hspace{12mm} \textbf{For} $\mathbf{d^{*}} \in \min \mathcal{C}(\mathrm{Diag}(f(X+\mathbf{d}^{'}))$: 
\item \hspace{16mm} ``Add" $\mathbf{d^{*}}+\mathbf{d}^{'}$ to $\min\mathcal{D}_{\geq 0}(f)$.
\item \hspace{4mm} \textbf{If} $|S|\geq 2$:
\item \hspace{8mm}
\textbf{For} $s_1,s_2\in S$ ($s_1\neq s_2$):
\item \hspace{12mm}
Make $\mathbf{d}^{'}_{s_1}=\mathbf{d}_{s_1}+1$, $\mathbf{d}^{'}_{s_2}=\mathbf{d}_{s_2}+1$, $\bff{d}^{'}_r=\bff{d}_r$ for all $r\in [n]\setminus \{s_1,s_2\}$ 
\item \hspace{12mm} \textbf{For} $\mathbf{d^{*}} \in \min \mathcal{C}(\mathrm{Diag}(f(X+\mathbf{d}^{'}))$: 
\item \hspace{16mm} ``Add" $\mathbf{d^{*}}+\mathbf{d}^{'}$ to $\min\mathcal{D}_{\geq 0}(f)$.

\item \hspace{0mm} Return $\min \mathcal{D}_{\geq 0}(f)$ and $\mathcal{D}(f)=\{ \bff{d}\in\min\mathcal{D}_{\geq 0}(f)\ |\ f(\mathbf{d})=0 \}$.
\end{enumerate}
\noindent\hrulefill 
\end{Algorithm}

The vector at step (5) is the supremum of the set of vectors $\{\boldsymbol{\delta}_1,\ldots , \boldsymbol{\delta}_n \}$ under the usual (entry by entry) order.
The function ``add" at steps (9), (14) and (19) means that we add the corresponding vector to the set $\min\mathcal{D}_{\geq 0}(L)$ whenever it is not greater than other vector already in the set.
Afterwards, by erasing every vector greater than said vector from the set, then the minimality of the set is assured.
The proof of the correctness of Algorithm~\ref{A1Pol} will be similar to the one given for~\cite[Algorithm 3.2]{algorithmic}. 
Thus we begin by extending~\cite[Lemma 3.1]{algorithmic} for the polynomial case.  

\begin{Lemma}\label{lemmapol}
If $a,b_1,b_2,c\in \mathbb{Z}$, $a\geq 1$ and $f=ax_1x_2+b_1x_1+b_2x_2+c$, then
\[
\min \mathcal{D}_{\geq 0}(f)=\min \left\{ \left(d,\max\Big(d_2^+,\Big\lceil \frac{-(c+b_1d)}{ad+b_2}\Big\rceil \Big)\right) \Big|\, d\in\mathbb{N}_+,\, d_1^{+} \leq d\leq \max \Big(d_1^+,\Big\lceil \frac{-(c+b_2d_2^{+} )}{ad_2^{+} +b_1 } \Big\rceil \Big) \right\},
\]
where $d_1^{+}=\max(1,\lceil\frac{1-b_2}{a}\rceil)$ and $d_2^{+}=\max(1,\lceil\frac{1-b_1}{a}\rceil)$.
\end{Lemma}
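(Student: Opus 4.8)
The plan is to reduce the computation of $\min\mathcal{D}_{\geq 0}(f)$ to an explicit one–parameter family by making the translation $f(X+\mathbf{d})$ completely explicit. First I would expand
\[
f(X_1+d_1,X_2+d_2)=aX_1X_2+(ad_2+b_1)X_1+(ad_1+b_2)X_2+f(d_1,d_2),
\]
so that the three non-constant coefficients are $a$, $ad_2+b_1$ and $ad_1+b_2$, and the constant coefficient is $f(\mathbf{d})$ itself. Since $a\geq 1$ is automatically positive, the membership $\mathbf{d}\in\mathcal{D}_{\geq 0}(f)$ is equivalent to the system $d_1,d_2\geq 1$, $ad_2+b_1\geq 1$, $ad_1+b_2\geq 1$ and $f(\mathbf{d})\geq 0$. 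Rewriting $ad_2+b_1\geq 1$ as $d_2\geq\lceil(1-b_1)/a\rceil$ and $ad_1+b_2\geq 1$ as $d_1\geq\lceil(1-b_2)/a\rceil$, and intersecting with $d_1,d_2\geq 1$, the coefficient conditions become precisely $d_1\geq d_1^{+}$ and $d_2\geq d_2^{+}$.

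Next I would observe that on the quadrant $Q=\{d_1\geq d_1^{+},\,d_2\geq d_2^{+}\}$ both partial derivatives $\partial_{d_1}f=ad_2+b_1$ and $\partial_{d_2}f=ad_1+b_2$ are $\geq 1$, so $f$ is strictly increasing in each coordinate there. Consequently $\mathcal{D}_{\geq 0}(f)$ is an up-set inside $\mathbb{N}_+^2$: enlarging any feasible $\mathbf{d}$ keeps it in $Q$ and cannot decrease $f$. It follows that $\mathcal{D}_{\geq 0}(f)$ is determined by the bottom of each vertical column: for fixed $d_1=d\geq d_1^{+}$ the inequality $f(d,d_2)\geq 0$ reads $d_2(ad+b_2)\geq -(c+b_1d)$, and since $ad+b_2\geq 1>0$ this is $d_2\geq\lceil-(c+b_1d)/(ad+b_2)\rceil$. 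Intersecting with $d_2\geq d_2^{+}$, the least feasible second coordinate in column $d$ is precisely $g(d):=\max\bigl(d_2^{+},\lceil-(c+b_1d)/(ad+b_2)\rceil\bigr)$, which is the inner expression of the claimed formula.

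It then remains to identify the correct range of $d$. By the symmetric computation with the roles of the two variables exchanged, $g(d)=d_2^{+}$ holds exactly when $d\geq\lceil-(c+b_2d_2^{+})/(ad_2^{+}+b_1)\rceil$ (here $ad_2^{+}+b_1\geq 1>0$ justifies the division), so $g$ has already reached its floor value $d_2^{+}$ once $d\geq D_{\max}:=\max\bigl(d_1^{+},\lceil-(c+b_2d_2^{+})/(ad_2^{+}+b_1)\rceil\bigr)$. Any column with $d>D_{\max}$ therefore produces the point $(d,d_2^{+})$, which is dominated by $(D_{\max},d_2^{+})$ and hence is never minimal; this is what caps the index set at $d\leq D_{\max}$. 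Finally, since the candidate set $C=\{(d,g(d))\mid d_1^{+}\leq d\leq D_{\max}\}$ is a subset of $\mathcal{D}_{\geq 0}(f)$ containing the bottom of every relevant column, a short poset argument shows $\min\mathcal{D}_{\geq 0}(f)=\min C$: every minimal element of $\mathcal{D}_{\geq 0}(f)$ equals the bottom $(d,g(d))$ of its column and has $d_1^{+}\leq d\leq D_{\max}$, so it lies in $C$ and remains minimal there, while any $(d,g(d))\in\min C$ cannot be strictly dominated inside $\mathcal{D}_{\geq 0}(f)$ because a dominating point would force some candidate $(d',g(d'))$ strictly below it. Applying the outer $\min$ to $C$ discards exactly those columns where $g$ fails to strike a new minimum, yielding the stated antichain.

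The routine part is the algebra of the two ceiling-and-max computations; the step requiring the most care is checking that the division directions are preserved, that is, that $ad+b_2>0$ and $ad_2^{+}+b_1>0$ throughout the relevant ranges. These positivity facts, inherited from $d\geq d_1^{+}$ and from $d_2^{+}\geq\lceil(1-b_1)/a\rceil$, are exactly what let me clear denominators without flipping inequalities and what guarantee the monotonicity of $f$ that makes $\mathcal{D}_{\geq 0}(f)$ an up-set in the first place.
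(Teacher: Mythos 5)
Your proposal is correct and follows essentially the same route as the paper's proof: the same reduction of membership in $\mathcal{D}_{\geq 0}(f)$ to $d_1\geq d_1^{+}$, $d_2\geq d_2^{+}$ and $f(\mathbf{d})\geq 0$, the same per-column minimal second coordinate $\max\bigl(d_2^{+},\lceil -(c+b_1d)/(ad+b_2)\rceil\bigr)$, and the same cap $\max\bigl(d_1^{+},\lceil -(c+b_2d_2^{+})/(ad_2^{+}+b_1)\rceil\bigr)$ on the column index. Your explicit up-set/monotonicity framing merely repackages the paper's case analysis (conditions (2.3)--(2.5)) into a single formula, with the degenerate case $f(d_1^{+},d_2^{+})\geq 0$ absorbed into $D_{\max}=d_1^{+}$.
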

\begin{proof}
A vector $\bff{d}=(d_1,d_2)\in\mathbb{Z}^2$ is in $\mathcal{D}_{\geq 0}(f)$ if and only if 
$d_1,d_2\geq 1$, $ad_1+b_2,ad_2+b_1\geq 1$ and
\begin{equation}\label{basecondition}
ad_1d_2+b_1d_1+b_2d_2+c\geq 0.
\end{equation}
We set $d_1^+=\max(1,\lceil\frac{1-b_2}{a}\rceil)$ and $d_2^{+}=\max(1,\lceil\frac{1-b_1}{a}\rceil)$. 
It is clear that if $\bff{d}\in\mathcal{D}_{\geq 0}(f)$, then $\bff{d}\geq (d_1^+,d_2^+)$.
On the other hand, if $(d_1,d_2)\geq (d_1^+,d_2^+)$, then the only condition left for $\bff{d}$ to be in $\mathcal{D}_{\geq 0}(f)$ is \ref{basecondition}. 
Therefore, if $ad_1^+ d_2^+ +b_1d_1^+ +b_2d_2^+ +c\geq 0, \text{ then } \min \mathcal{D}_{\geq 0}(f)=\{(d_1^+,d_2
^+)\}$.
Henceforth, let us assume that
\begin{equation}\label{condition1}
ad_1^+ d_2^+ +b_1d_1^+ +b_2d_2^+ +c < 0\ (\leq -1)
\end{equation} 
and
\begin{equation}\label{condition2}
 ad_1d_2^+ +b_1d_1+b_2d_2^+ +c < 0.
\end{equation}
Thus $d_1^+\leq d_1 < \frac{-(c+b_2d_2^+ )}{ad_2^+ +b_1}$
and in order to fulfill condition (\ref{basecondition}), we have that $d_2\geq \frac{-(c+b_1d_1)}{ad +b_2}$. 
Also note that $\max(d_2^+, \frac{-(c+b_1d_1)}{ad_1 +b_2} )= \frac{-(c+b_1d_1)}{ad_1 +b_2}$ by (\ref{condition2}). 
Then 
\[
\min \left\{ (d_1,\lceil \frac{-(c+b_1d_1)}{ad_1 +b_2}\rceil )|\,d_1^+\leq d_1\leq \lfloor \frac{-(c+b_2d_2^+)}{ad_2^+ +b_1}\rfloor \right\} \subseteq \min\mathcal{D}_{\geq 0}(f).
\]
Finally, if 
\begin{equation}\label{condition3}
 ad_1d_2^+ +b_1d_1+b_2d_2^+ +c \geq 0,
\end{equation}
then we have that $\max(d_2^+, \frac{-(c+b_1d_1)}{ad_1 +b_2} )=d_2^+$ and
$
d_1\geq \frac{-(c+b_2d_2^+ )}{ad_2^+ +b_1}.
$
Thus 
\[
\min\{\bff{d}\in\mathcal{D}_{\geq 0}(f)|\,\text{ \ref{condition1} and \ref{condition3} holds} \}=\{(\lceil \frac{-(c+b_2d_2^+ )}{ad_2^+ +b_1}\rceil, d_2^+)\}.
\] 
We conclude that
\[
\min \mathcal{D}_{\geq 0}(f)=\begin{cases}
\min \left\{ \{ (d,\lceil\frac{-(c+b_1d)}{ad+b_2}\rceil )|\,d_1^+\leq d\leq \lfloor \frac{-(c+b_2d_2^+)}{ad_2^+ +b_1}\rfloor \}\cup\{ (\lceil\frac{-(c+b_2d_2^+)}{ad_2^+ +b_1}\rceil,d_2^+ )\} \right\} & \text{if \ref{condition1} holds},\\
\{(d_1^+,d_2^+)\} &\text{otherwise}. 
\end{cases}
\]
Clearly, this can be restated so that we have the result.
\end{proof}

\begin{Remark}
Note that $\mathcal{D}_{\geq 0}(f)$ is an infinite set, but by Dickson's Lemma $\min \mathcal{D}_{\geq 0}(f)$ is finite.
Also $f$ is monotone, that is, if $g(x_1,x_2)=f(x_1+d_1^+ ,x_2+d_2^+)$ has positive non-constant coefficients, then $g(x_1+\epsilon_1^{'},x_2+\epsilon_2^{'}) > g(x_1+\epsilon_1,x_2+\epsilon_2) > g(x_1,x_2)$ for every $(\epsilon_1^{'},\epsilon_2^{'}) > (\epsilon_1,\epsilon_2) > 0$. 
\end{Remark}

\begin{Example}\label{examplefig2}
Let $f=f(x_1,x_2)=2x_1x_2-7x_1-10x_2+16$ and let $d_1^+$ and $d_2^+$ be as in Lemma~\ref{lemmapol}. 
It is not difficult to check that $(d_1^+,d_2^+)=(6,4)$ and 
\begin{eqnarray*}
\min \mathcal{D}_{\geq 0}(f)&=&\min\left\{ \Big(d,\max\big(4,\Big\lceil\frac{-(16-7d)}{2d-10}\Big\rceil \big) \Big)\Big|\,d\in\mathbb{N}_+,\ 6\leq d\leq 24 \right\}\\
&=&\min\left\{ \begin{tabular}{c}
(6,13),(7,9),(8,7),(9,6),(10,6),(11,6),(12,5),(13,5),(14,5),(15,5),\\
(16,5),(17,5),(18,5),(19,5),(20,5),(21,5),(22,5),(23,5),(24,4)\end{tabular}
\right\}\\
&=&\left\{ \begin{tabular}{c}
 (6,13),(7,9),(8,7),(9,6),(12,5),(24,4) \end{tabular}\right\}.
\end{eqnarray*}
And therefore $\mathcal{D}(f)=\{(6,13),(24,4)\}$.
\end{Example}

Now we proceed to prove that the Algorithm~\ref{A1Pol} is correct.

\begin{Theorem}
Algorithm~\ref{A1Pol} computes the sets $\min\mathcal{D}_{\geq 0}(f)$ and $\mathcal{D}(f)$ for any irreducible square-free dominated polynomial $f\in\mathbb{Z}[X]$.
\end{Theorem}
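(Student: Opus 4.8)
The plan is to prove the correctness of Algorithm~\ref{A1Pol} by induction on the number $n$ of variables, following the scheme of~\cite[Algorithm 3.2]{algorithmic} but replacing every determinantal argument by the intrinsic relation between the coefficients of a shift of a square-free polynomial and its partial derivatives. For $n\le 2$ the statement is exactly Lemma~\ref{lemmapol} together with the monotonicity recorded in the Remark, so I assume $n\ge 3$ and that the algorithm already returns $\tilde{A}_s=\min\mathcal D_{\ge 0}(\partial_s f)$ correctly; here each $\partial_s f$ is a dominated square-free polynomial in the $n-1$ variables $\{x_i:i\ne s\}$ (possibly reducible, but with $\min\mathcal D_{\ge 0}$ still finite by Dickson's Lemma), so the inductive hypothesis must be phrased for all dominated square-free polynomials, not only irreducible ones.

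The cornerstone is the finite Taylor identity: since $f$ is square-free, $f(X+\mathbf d)=\sum_{\beta\in\{0,1\}^n}(\partial^{\beta}f)(\mathbf d)\,X^{\beta}$, so the coefficient of $X^{\beta}$ is exactly $(\partial^{\beta}f)(\mathbf d)$; in particular $\mathrm{coef}_{\mathbf d}(x_s)=(\partial_s f)(\mathbf d)$. Grouping the monomials of $f(X+\mathbf d)$ according to whether they contain $x_s$, and using that $\partial_s f$ does not involve $x_s$, one obtains the equivalence: all non-constant coefficients of $f(X+\mathbf d)$ are positive if and only if, for every $s$, all coefficients (constant term included) of $(\partial_s f)(Y+\mathbf d_{\widehat s})$ are positive, where $\mathbf d_{\widehat s}$ denotes $\mathbf d$ with its $s$-th entry deleted. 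The only discrepancy with membership in $\mathcal D_{\ge 0}(\partial_s f)$ is that the constant term $(\partial_s f)(\mathbf d_{\widehat s})=\mathrm{coef}_{\mathbf d}(x_s)$ must be strictly positive rather than merely non-negative, and this is precisely the boundary detected by $S=\{s:\mathrm{coef}_{\mathbf d}(x_s)=0\}$.

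For completeness, take $\mathbf d^{\circ}\in\min\mathcal D_{\ge 0}(f)$. By the dictionary each $\mathbf d^{\circ}_{\widehat s}$ lies in $\mathcal D_{\ge 0}(\partial_s f)$, so by induction some $\widetilde{\mathbf d}_s\in\tilde{A}_s$ satisfies $\widetilde{\mathbf d}_s\le \mathbf d^{\circ}_{\widehat s}$; inserting a $1$ in coordinate $s$ gives $\boldsymbol\delta_s=\widetilde{\mathbf d}_s^{(s)}\le \mathbf d^{\circ}$, whence the supremum $\mathbf d=\sup_s\boldsymbol\delta_s$ produced at step (5) for this $\boldsymbol\delta$ satisfies $\mathbf d\le \mathbf d^{\circ}$. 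At this $\mathbf d$, monotonicity forces every $(\partial_s f)(Y+\mathbf d_{\widehat s})$ to have positive non-constant coefficients, so the mixed partials $(\partial_t\partial_s f)(\mathbf d)$ are positive for $t\ne s$; combined with square-freeness this yields the key increment identity $(\partial_s f)(\mathbf d+\mathbf e_t)=(\partial_s f)(\mathbf d)+(\partial_t\partial_s f)(\mathbf d)$, so a single unit step in any coordinate $t$ strictly raises every linear coefficient $\mathrm{coef}(x_s)$ with $s\ne t$ while leaving $\mathrm{coef}(x_t)$ unchanged. Soundness then follows from this identity and the Remark: after the prescribed increment the shifted polynomial has positive non-constant coefficients, the completion step only enforces $f(\mathbf d^{*}+\mathbf d')\ge 0$, and the ``add'' bookkeeping keeps the stored set minimal.

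The heart of the argument, and its \textbf{main obstacle}, is to show that from $\mathbf d$ the target $\mathbf d^{\circ}$ is reached using only the three branches. If $S=\varnothing$ all linear coefficients are already positive and $\mathbf d^{\circ}$ is obtained directly by $\min\mathcal C(\cdot)$. Otherwise I split according to how $\mathbf d^{\circ}$ dominates $\mathbf d$: either $\mathbf d^{\circ}_t>\mathbf d_t$ for some $t\notin S$, and then the increment identity makes all linear coefficients positive after a single step in $t$, matching steps (11)--(14); or $\mathbf d^{\circ}$ and $\mathbf d$ agree off $S$, in which case raising $\mathrm{coef}(x_s)$ for each $s\in S$ forces some \emph{other} coordinate of $S$ to be incremented, hence at least two incremented coordinates $s_1,s_2\in S$ and $\mathbf d^{\circ}\ge \mathbf d+\mathbf e_{s_1}+\mathbf e_{s_2}$, matching steps (16)--(19); one checks that incrementing such a pair makes all linear coefficients positive simultaneously. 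In either branch the shift now has positive non-constant coefficients, so by monotonicity the residual requirement $f\ge 0$ is governed by the bilinear reduction to which Lemma~\ref{lemmapol} applies, and $\min\mathcal C(\mathrm{Diag}(\cdot))$ returns exactly the minimal completing coordinates, so a vector $\le\mathbf d^{\circ}$ is added and minimality forces equality. Finiteness of the output is Dickson's Lemma, and the last line extracts $\mathcal D(f)$ as those members of $\min\mathcal D_{\ge 0}(f)$ with $f(\mathbf d)=0$.
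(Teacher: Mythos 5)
Your proposal is correct and takes essentially the same route as the paper's proof: induction on the number of variables with Lemma~\ref{lemmapol} as the base case, lower bounds on each $\mathbf{u}\in\min\mathcal{D}_{\geq 0}(f)$ obtained from $\min\mathcal{D}_{\geq 0}(\partial_s f)$ via the supremum at step (5), and a case analysis on $S$ showing that any minimal $\mathbf{u}$ missed by the three branches would force $\mathrm{coef}_{\mathbf{u}}(x_s)=0$ for some $s\in S$, a contradiction. Your explicit multilinear Taylor dictionary $f(X+\mathbf{d})=\sum_{\beta\in\{0,1\}^n}(\partial^{\beta}f)(\mathbf{d})X^{\beta}$ and your strengthening of the inductive hypothesis to all (possibly reducible) square-free dominated partials merely make precise two points the paper leaves implicit.
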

\begin{proof}
First, without loss of generality we can assume that every variable in $X$ appears in some monomial of $f$ and that $|X|=n$.
In the case of a matrix $L$, induction on the size of $L$ and the $n-1$ minors of $(\textrm{Diag}(X+\mathbf{d})-L)$ correspond to induction on the degree of $f$ and its first partial derivatives respectively. 
Thus, we will proceed by induction on the number of variables in $X$, which is the degree of $f$.

If $f=f(X)$ is a square-free dominated polynomial with $|X|=2$ and positive leading coefficient we have that $X=\{x_1,x_2 \}$ and $f=ax_1x_2+b_1x_1+b_2x_2+c$ and therefore we get the result by Lemma~\ref{lemmapol}. 

Now, assume that the algorithm is correct for every number of variables up to $n-1$ and let $X=\{x_1,\ldots , x_n\}$ and $f\in \mathbb{Z}[X]$ be an irreducible square-free dominated polynomial of degree $n$ with positive leading coefficient.
It is not difficult to check, that steps (1) to (5) of Algorithm~\ref{A1Pol} creates a set of vectors $\Delta$ such that if $\bff{d}\in \Delta$, then the nonconstant coefficients of any monomial of degree at least 2 in $f(x_1+d_1,\ldots,x_n+d_n)$ are positive.
Moreover, the nonconstant coefficients of any term of degree one of $f(X+\mathbf{d})$ are non-negative whereas the constant term may be negative.

If $S=\emptyset$ implies that every nonconstant coefficient of $f(X+\mathbf{d})$ is positive, see Step (7).
Steps (10) - (12) and steps (15) - (17) handle the other two cases. 
That is, we have that all nonconstant coefficients of $f(X+\mathbf{d}^{'})$ are positive. 
Let $\Delta^{'}$ be the set of all of these vectors obtained at steps of Algorithm \ref{A1Pol}.
We will prove that in steps (8)-(9), (13)-(14), (18)-(19) and (20), the algorithm increases the vectors in $\Delta^{'}$ further so that we get all the vectors in $\min\mathcal{D}_{\geq 0} (f)$.
Note that if $\mathbf{d}^{'}\in \Delta^{'}$, then by the definition of $\mathcal{C}(f)$ every vector $\mathbf{u}\geq\mathbf{d}^{'}$ such that $f(X+\bff{u})$ has all of its noncontant coefficients positive and the constant non-negative coefficient can be reached on steps (8)-(9), (13)-(14) and (18)-(19). 
Therefore we only need to prove that every $\mathbf{u}\in\min\mathcal{D}_{\geq 0}(f)$ is reached by some vector in $\Delta^{'}$.

In order to prove this, for every $\mathbf{u}\in\mathcal{D}_{\geq 0}(f)$, let $\mathbf{u}_{|s}$ be the vector equal to $\mathbf{u}$ without the $s$-th entry.
That is,
\[
(\mathbf{u}_{|s})_i=\begin{cases}
\mathbf{u}_i, & \text{ if }1\leq i\leq s-1,\\
\mathbf{u}_{i+1}, & \text{ if }s\leq i\leq n-1.
\end{cases}
\]
Then for every $s\in[n]$, we have that $\mathbf{u}_{|s}\in\mathcal{D}_{\geq 0}(\partial_s f)$ and there exists $\mathbf{\tilde{u}}\in\min\mathcal{D}_{\geq 0}(\partial_s f)$ such that $\mathbf{\tilde{u}}\leq \mathbf{u}_{|s}$. 
Consequently, we have that 
\[
\max_{s\in [n]} \left\{ (\mathbf{\tilde{u}}^{(s)})_i \right\} \leq \mathbf{u}_i,
\]
where $\mathbf{u}^{(s)}$ is as in equation (\ref{expand}). 
In other words, every $\mathbf{u}\in\min\mathcal{D}_{\geq 0}(f)$ is greater or equal than a vector presented by step (5). 
Therefore let $\mathbf{u}\in\min\mathcal{D}_{\geq 0}(f)$ and let $\bff{d}\leq \bff{f}$ be such vector given at step (5). 
Then, assume that there is no vector $\bff{d}^{'}\geq \bff{d}$ in $\Delta^{'}$ such that $\bff{u}\geq \bff{d}^{'}$. 
Note that $S=\{s\ |\ \mathrm{coef}_{\mathbf{d}}(x_s) = 0 \}\neq \emptyset$ and that any vector in $\Delta^{'}$ can not be greater or equal than $\bff{u}$. 
Thus $\bff{u}=\bff{d}+a\bff{e_s}$ for some $a\in \mathbb{N}_+$ and some $s\in S$, where $\bff{e_s}\in \mathbb{N}^n$ is the standard unit vector with its $s$-th entry equal to $1$. 
Therefore $\textrm{coef}_{\bff{u}}(x_s)=0$, a contradiction since $\bff{u}\in \min\mathcal{D}_{\geq 0}(f)$. 
Concluding that there is a vector $\bff{d}^{'}\geq \bff{d}$ in $\Delta^{'}$ such that $\bff{u}\geq \bff{d}^{'}$ and therefore the algorithm computes $\min\mathcal{D}_{\geq 0}(f)$ and $\mathcal{D}(f)$. 
\end{proof}
The next example illustrates how Algorithm~\ref{A1Pol} works on a polynomial which is not the determinant of a matrix with variables in the diagonal. 

\begin{Example}
Let $f=x_1x_2x_3-19x_1+2x_2+3x_3-23$ be the irreducible polynomial given in Example~\ref{p23}.
Step $(2)$ of Algorithm~\ref{A1Pol} gives us
\[
\begin{array}{ccc}
\partial_1 f=x_2x_3-19 & \partial_2 f=x_1x_3+2 & \partial_3 f=x_1x_2+3.
\end{array}
\]
From step $(3)$ and Lemma \ref{lemmapol} we get that $\min \mathcal{D}_{\geq 0}(\partial_2 f)=\min \mathcal{D}_{\geq 0}(\partial_3 f)=\{(1,1)\}$ and
\[
\min \mathcal{D}_{\geq 0}(\partial_1 f)=\{ (1,19), (19,1), (2,10), (10,2), (3,7), (7,3), (4,5), (5,4) \}.
\]
Continuing with Algorithm \ref{A1Pol} we have the following set of vectors to search,
\[
\Pi=
\begin{Bmatrix}
(1,1,19) & (1,2,10) & (1,3,7) & (1,4,5)\\
(1,19,1) & (1,10,2) & (1,7,3)  & (1,5,4)
\end{Bmatrix}.
\]
Note that $f_{\mathbf{d}}(X)$ has positive constant term for almost every vector $\mathbf{d}\in\Pi$, except for $(1,5,4)$.
That is, only the vector $(1,5,4)$ has the chance to be an arithmetical structure of $f$.
Indeed, since
\[
f_{(1,5,4)}(X)=x_1x_2x_3+4x_1x_2+5x_1x_3+x_2x_3+x_1+6x_2+8x_3+0,
\]
then $\mathcal{D}(f)=\{(1,5,4)\}$.
\end{Example}

Next Figure illustrate the geometry of Lemma~\ref{lemmapol}.
If $P_G$ is the green region as $P_G$, then it corresponds to $\mathcal{D}_{\geq 0}(f)$ since it is the portion of the $\mathbb{N}_+$-grid ``above" $(d_1^+,d_2^+)$ and such that $f\geq 0$. 
More precisely, $\mathcal{D}_{\geq 0}(f)=P_G\cap\mathbb{N}_+^2$. 
Furthermore, it is not difficult to see that if $g$ is a polynomial of degree $n$ then $\mathcal{D}_{\geq 0}(g)= P\cap \mathbb{N}_+^n$, where $P$ is an unbounded n-dimensional polytope.
    
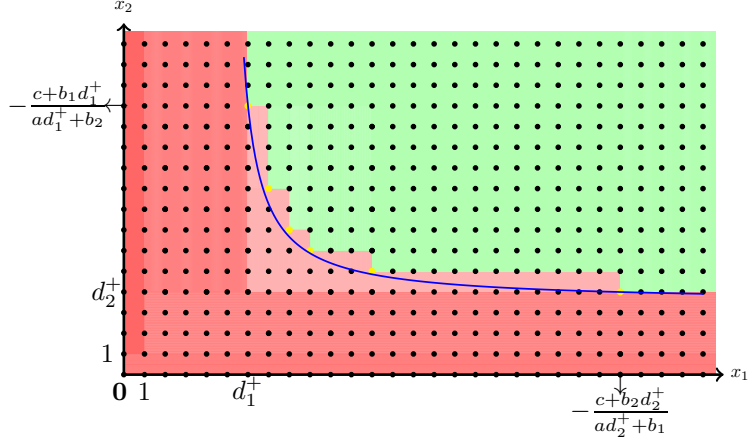
\begin{figure}[ht] \centering
\begin{tikzpicture}[line width=1pt, scale=0.55]
\tikzstyle{every node}=[inner sep=0pt, minimum width=1.1pt]
\shade[bottom color=red!60,top color=red!60] (0,0) rectangle +(14.3,0.5);
\shade[bottom color=red!60,top color=red!60] (0,0.5) rectangle +(0.5,7.8);

\shade[bottom color=red!50,top color=red!50] (0.5,0.5) rectangle +(13.8,1.5);
\shade[left color=red!50,right color=red!50] (0.5,2) rectangle +(2.5,6.3);
\shade[left color=red!30,right color=red!30] (3,2) rectangle +(9,0.5);
\shade[left color=red!30,right color=red!30] (3,2.5) rectangle +(3,0.5);
\shade[left color=red!30,right color=red!30] (3,3) rectangle +(1.5,0.5);
\shade[left color=red!30,right color=red!30] (3,3.5) rectangle +(1,1);
\shade[left color=red!30,right color=red!30] (3,4.5) rectangle +(0.5,2);

\shade[left color=green!30,right color=green!30] (6,2.5) rectangle +(6,4);
\shade[left color=green!30,right color=green!30] (4.5,3) rectangle +(1.5,3.5);
\shade[left color=green!30,right color=green!30] (4,3.5) rectangle +(0.5,3);
\shade[left color=green!30,right color=green!30] (3.5,4.5) rectangle +(0.5,2);

\shade[left color=green!30,right color=green!30] (12,2) rectangle +(2.3,4.5);
\shade[left color=green!30,right color=green!30] (3,6.5) rectangle +(9,1.8);

\shade[left color=green!30,right color=green!30] (12,6.5) rectangle +(2.3,1.8);

\draw (14,0)--(0,0)--(0,8);
\draw[->](0,8)->(0,8.5);
\draw[->] (14,0)->(14.5,0);
\node at (0,8.9)[auto,scale=0.7] {\small $x_2$};
\node at (14.9,0)[auto,scale=0.7] {\small$x_1$};

\filldraw[black] (2,3) circle (1pt);\filldraw[black] (4,3) circle (1pt);\filldraw[black] (5,3) circle (1pt);\filldraw[black] (1,4) circle (1pt);\filldraw[black] (3,3) circle (1pt);\filldraw[black] (1,4) circle (1pt);\filldraw[black] (2,4) circle (1pt);\filldraw[black] (3,4) circle (1pt);\filldraw[black] (4,4) circle (1pt);\filldraw[black] (5,4) circle (1pt);\filldraw[black] (1,5) circle (1pt);\filldraw[black] (2,5) circle (1pt);\filldraw[black] (3,5) circle (1pt);\filldraw[black] (4,5) circle (1pt);\filldraw[black] (5,5) circle (1pt);\filldraw[black] (3,2) circle (1pt);\filldraw[black] (4,2) circle (1pt);\filldraw[black] (5,2) circle (1pt);\filldraw[black] (4,1) circle (1pt);\filldraw[black] (5,1) circle (1pt);\filldraw[black] (5,1) circle (1pt);\filldraw[black] (6,0) circle (1pt);\filldraw[black] (6,1) circle (1pt);\filldraw[black] (6,2) circle (1pt);\filldraw[black] (6,3) circle (1pt);\filldraw[black] (6,4) circle (1pt);\filldraw[black] (6,5) circle (1pt);\filldraw[black] (6,6) circle (1pt);\filldraw[black] (0,6) circle (1pt);\filldraw[black] (1,6) circle (1pt);\filldraw[black] (2,6) circle (1pt);\filldraw[black] (3,6) circle (1pt);\filldraw[black] (4,6) circle (1pt);\filldraw[black] (5,6) circle (1pt);\filldraw[black] (0,4) circle (1pt);\filldraw[black] (0,5) circle (1pt);\filldraw[black] (2,2) circle (1pt);\filldraw[black] (4,0) circle (1pt);\filldraw[black] (4.5,0) circle (1pt);\filldraw[black] (5,0) circle (1pt);\filldraw[black] (5.5,0) circle (1pt);\filldraw[black] (6.5,0) circle (1pt);\filldraw[black] (7,0) circle (1pt);\filldraw[black] (7.5,0) circle (1pt);\filldraw[black] (8,0) circle (1pt);\filldraw[black] (8.5,0) circle (1pt);\filldraw[black] (9,0) circle (1pt);\filldraw[black] (9.5,0) circle (1pt);\filldraw[black] (10,0) circle (1pt);\filldraw[black] (10.5,0) circle (1pt);\filldraw[black] (11,0) circle (1pt);\filldraw[black] (11.5,0) circle (1pt);\filldraw[black] (12,0) circle (1pt);\filldraw[black] (12,0) circle (1pt);\filldraw[black] (12.5,0) circle (1pt);\filldraw[black] (13,0) circle (1pt);\filldraw[black] (13.5,0) circle (1pt);\filldraw[black] (14,0) circle (1pt);\filldraw[black] (0.5,0.5) circle (1pt);\filldraw[black] (1,0.5) circle (1pt);
\filldraw[black] (1.5,0.5) circle (1pt);\filldraw[black] (2,0.5) circle (1pt);\filldraw[black] (2.5,0.5) circle (1pt);\filldraw[black] (3,0.5) circle (1pt);\filldraw[black] (3.5,0.5) circle (1pt);\filldraw[black] (4,0.5) circle (1pt);\filldraw[black] (4.5,0.5) circle (1pt);\filldraw[black] (5,0.5) circle (1pt);\filldraw[black] (5.5,0.5) circle (1pt);\filldraw[black] (6,0.5) circle (1pt);\filldraw[black] (6.5,0.5) circle (1pt);\filldraw[black] (7,0.5) circle (1pt);\filldraw[black] (7.5,0.5) circle (1pt);\filldraw[black] (8,0.5) circle (1pt);\filldraw[black] (8.5,0.5) circle (1pt);\filldraw[black] (9,0.5) circle (1pt);\filldraw[black] (9.5,0.5) circle (1pt);\filldraw[black] (10,0.5) circle (1pt);\filldraw[black] (10.5,0.5) circle (1pt);\filldraw[black] (11,0.5) circle (1pt);\filldraw[black] (11.5,0.5) circle (1pt);\filldraw[black] (12,0.5) circle (1pt);\filldraw[black] (12,0.5) circle (1pt);\filldraw[black] (12.5,0.5) circle (1pt);\filldraw[black] (13,0.5) circle (1pt);\filldraw[black] (13.5,0.5) circle (1pt);\filldraw[black] (14,0.5) circle (1pt);\filldraw[black] (0.5,1.5) circle (1pt);\filldraw[black] (1,1.5) circle (1pt);
\filldraw[black] (1.5,1.5) circle (1pt);\filldraw[black] (2,1.5) circle (1pt);\filldraw[black] (2.5,1.5) circle (1pt);\filldraw[black] (3,1.5) circle (1pt);\filldraw[black] (3.5,1.5) circle (1pt);\filldraw[black] (4,1.5) circle (1pt);\filldraw[black] (4.5,1.5) circle (1pt);\filldraw[black] (5,1.5) circle (1pt);\filldraw[black] (5.5,1.5) circle (1pt);\filldraw[black] (6,1.5) circle (1pt);\filldraw[black] (6.5,1.5) circle (1pt);\filldraw[black] (7,1.5) circle (1pt);\filldraw[black] (7.5,1.5) circle (1pt);\filldraw[black] (8,1.5) circle (1pt);\filldraw[black] (8.5,1.5) circle (1pt);\filldraw[black] (9,1.5) circle (1pt);\filldraw[black] (9.5,1.5) circle (1pt);\filldraw[black] (10,1.5) circle (1pt);\filldraw[black] (10.5,1.5) circle (1pt);\filldraw[black] (11,1.5) circle (1pt);\filldraw[black] (11.5,1.5) circle (1pt);\filldraw[black] (12,1.5) circle (1pt);\filldraw[black] (12,1.5) circle (1pt);\filldraw[black] (12.5,1.5) circle (1pt);\filldraw[black] (13,1.5) circle (1pt);\filldraw[black] (13.5,1.5) circle (1pt);\filldraw[black] (14,1.5) circle (1pt);

\filldraw[black] (0.5,2) circle (1pt);\filldraw[black] (1,2) circle (1pt);\filldraw[black] (1.5,2) circle (1pt);\filldraw[black] (2,2) circle (1pt);\filldraw[black] (2.5,2) circle (1pt);\filldraw[black] (3,2) circle (1pt);\filldraw[black] (3.5,2) circle (1pt);\filldraw[black] (4,2) circle (1pt);\filldraw[black] (4.5,2) circle (1pt);\filldraw[black] (5,2) circle (1pt);\filldraw[black] (5.5,2) circle (1pt);\filldraw[black] (6,2) circle (1pt);\filldraw[black] (6.5,2) circle (1pt);\filldraw[black] (7,2) circle (1pt);\filldraw[black] (7.5,2) circle (1pt);\filldraw[black] (8,2) circle (1pt);\filldraw[black] (8.5,2) circle (1pt);\filldraw[black] (9,2) circle (1pt);\filldraw[black] (9.5,2) circle (1pt);\filldraw[black] (10,2) circle (1pt);\filldraw[black] (10.5,2) circle (1pt);\filldraw[black] (11,2) circle (1pt);\filldraw[black] (11.5,2) circle (1pt);\filldraw[yellow] (12,2) circle (1.5pt);\filldraw[black] (12.5,2) circle (1pt);\filldraw[black] (13,2) circle (1pt);\filldraw[black] (13.5,2) circle (1pt);\filldraw[black] (14,2) circle (1pt);

\filldraw[black] (0.5,2.5) circle (1pt);\filldraw[black] (1,2.5) circle (1pt);\filldraw[black] (1.5,2.5) circle (1pt);\filldraw[black] (2,2.5) circle (1pt);\filldraw[black] (2.5,2.5) circle (1pt);\filldraw[black] (3,2.5) circle (1pt);\filldraw[black] (3.5,2.5) circle (1pt);\filldraw[black] (4,2.5) circle (1pt);\filldraw[black] (4.5,2.5) circle (1pt);\filldraw[black] (5,2.5) circle (1pt);\filldraw[black] (5.5,2.5) circle (1pt);\filldraw[yellow] (6,2.5) circle (1.5pt);\filldraw[black] (6.5,2.5) circle (1pt);\filldraw[black] (7,2.5) circle (1pt);\filldraw[black] (7.5,2.5) circle (1pt);\filldraw[black] (8,2.5) circle (1pt);\filldraw[black] (8.5,2.5) circle (1pt);\filldraw[black] (9,2.5) circle (1pt);\filldraw[black] (9.5,2.5) circle (1pt);\filldraw[black] (10,2.5) circle (1pt);\filldraw[black] (10.5,2.5) circle (1pt);\filldraw[black] (11,2.5) circle (1pt);\filldraw[black] (11.5,2.5) circle (1pt);\filldraw[black] (12,2.5) circle (1pt);\filldraw[black] (12.5,2.5) circle (1pt);\filldraw[black] (13,2.5) circle (1pt);\filldraw[black] (13.5,2.5) circle (1pt);\filldraw[black] (14,2.5) circle (1pt);

\filldraw[black] (0.5,3) circle (1pt);\filldraw[black] (1,3) circle (1pt);\filldraw[black] (1.5,3) circle (1pt);\filldraw[black] (2,3) circle (1pt);\filldraw[black] (2.5,3) circle (1pt);\filldraw[black] (3,3) circle (1pt);\filldraw[black] (3.5,3) circle (1pt);\filldraw[black] (4,3) circle (1pt);\filldraw[yellow] (4.5,3) circle (1.5pt);\filldraw[black] (5,3) circle (1pt);\filldraw[black] (5.5,3) circle (1pt);\filldraw[black] (6,3) circle (1pt);\filldraw[black] (6.5,3) circle (1pt);\filldraw[black] (7,3) circle (1pt);\filldraw[black] (7.5,3) circle (1pt);\filldraw[black] (8,3) circle (1pt);\filldraw[black] (8.5,3) circle (1pt);\filldraw[black] (9,3) circle (1pt);\filldraw[black] (9.5,3) circle (1pt);\filldraw[black] (10,3) circle (1pt);\filldraw[black] (10.5,3) circle (1pt);\filldraw[black] (11,3) circle (1pt);\filldraw[black] (11.5,3) circle (1pt);\filldraw[black] (12,3) circle (1pt);\filldraw[black] (12.5,3) circle (1pt);\filldraw[black] (13,3) circle (1pt);\filldraw[black] (13.5,3) circle (1pt);\filldraw[black] (14,3) circle (1pt);

\filldraw[black] (0.5,3.5) circle (1pt);\filldraw[black] (1,3.5) circle (1pt);\filldraw[black] (1.5,3.5) circle (1pt);\filldraw[black] (2,3.5) circle (1pt);\filldraw[black] (2.5,3.5) circle (1pt);\filldraw[black] (3,3.5) circle (1pt);\filldraw[black] (3.5,3.5) circle (1pt);\filldraw[yellow] (4,3.5) circle (1.5pt);\filldraw[black] (4.5,3.5) circle (1pt);\filldraw[black] (5,3.5) circle (1pt);\filldraw[black] (5.5,3.5) circle (1pt);\filldraw[black] (6,3.5) circle (1pt);\filldraw[black] (6.5,3.5) circle (1pt);\filldraw[black] (7,3.5) circle (1pt);\filldraw[black] (7.5,3.5) circle (1pt);\filldraw[black] (8,3.5) circle (1pt);\filldraw[black] (8.5,3.5) circle (1pt);\filldraw[black] (9,3.5) circle (1pt);\filldraw[black] (9.5,3.5) circle (1pt);\filldraw[black] (10,3.5) circle (1pt);\filldraw[black] (10.5,3.5) circle (1pt);\filldraw[black] (11,3.5) circle (1pt);\filldraw[black] (11.5,3.5) circle (1pt);\filldraw[black] (12,3.5) circle (1pt);\filldraw[black] (12.5,3.5) circle (1pt);\filldraw[black] (13,3.5) circle (1pt);\filldraw[black] (13.5,3.5) circle (1pt);\filldraw[black] (14,3.5) circle (1pt);

\filldraw[black] (0.5,4) circle (1pt);\filldraw[black] (1,4) circle (1pt);\filldraw[black] (1.5,4) circle (1pt);\filldraw[black] (2,4) circle (1pt);\filldraw[black] (2.5,4) circle (1pt);\filldraw[black] (3,4) circle (1pt);\filldraw[black] (3.5,4) circle (1pt);\filldraw[black] (4,4) circle (1pt);\filldraw[black] (4.5,4) circle (1pt);\filldraw[black] (5,4) circle (1pt);\filldraw[black] (5.5,4) circle (1pt);\filldraw[black] (6,4) circle (1pt);\filldraw[black] (6.5,4) circle (1pt);\filldraw[black] (7,4) circle (1pt);\filldraw[black] (7.5,4) circle (1pt);\filldraw[black] (8,4) circle (1pt);\filldraw[black] (8.5,4) circle (1pt);\filldraw[black] (9,4) circle (1pt);\filldraw[black] (9.5,4) circle (1pt);\filldraw[black] (10,4) circle (1pt);\filldraw[black] (10.5,4) circle (1pt);\filldraw[black] (11,4) circle (1pt);\filldraw[black] (11.5,4) circle (1pt);\filldraw[black] (12,4) circle (1pt);\filldraw[black] (12.5,4) circle (1pt);\filldraw[black] (13,4) circle (1pt);\filldraw[black] (13.5,4) circle (1pt);\filldraw[black] (14,4) circle (1pt);

\filldraw[black] (0.5,4.5) circle (1pt);\filldraw[black] (1,4.5) circle (1pt);\filldraw[black] (1.5,4.5) circle (1pt);\filldraw[black] (2,4.5) circle (1pt);\filldraw[black] (2.5,4.5) circle (1pt);\filldraw[black] (3,4.5) circle (1pt);\filldraw[yellow] (3.5,4.5) circle (1.5pt);\filldraw[black] (4,4.5) circle (1pt);\filldraw[black] (4.5,4.5) circle (1pt);\filldraw[black] (5,4.5) circle (1pt);\filldraw[black] (5.5,4.5) circle (1pt);\filldraw[black] (6,4.5) circle (1pt);\filldraw[black] (6.5,4.5) circle (1pt);\filldraw[black] (7,4.5) circle (1pt);\filldraw[black] (7.5,4.5) circle (1pt);\filldraw[black] (8,4.5) circle (1pt);\filldraw[black] (8.5,4.5) circle (1pt);\filldraw[black] (9,4.5) circle (1pt);\filldraw[black] (9.5,4.5) circle (1pt);\filldraw[black] (10,4.5) circle (1pt);\filldraw[black] (10.5,4.5) circle (1pt);\filldraw[black] (11,4.5) circle (1pt);\filldraw[black] (11.5,4.5) circle (1pt);\filldraw[black] (12,4.5) circle (1pt);\filldraw[black] (12.5,4.5) circle (1pt);\filldraw[black] (13,4.5) circle (1pt);\filldraw[black] (13.5,4.5) circle (1pt);\filldraw[black] (14,4.5) circle (1pt);

\filldraw[black] (0.5,5) circle (1pt);\filldraw[black] (1,5) circle (1pt);\filldraw[black] (1.5,5) circle (1pt);\filldraw[black] (2,5) circle (1pt);\filldraw[black] (2.5,5) circle (1pt);\filldraw[black] (3,5) circle (1pt);\filldraw[black] (3.5,5) circle (1pt);\filldraw[black] (4,5) circle (1pt);\filldraw[black] (4.5,5) circle (1pt);\filldraw[black] (5,5) circle (1pt);\filldraw[black] (5.5,5) circle (1pt);\filldraw[black] (6,5) circle (1pt);\filldraw[black] (6.5,5) circle (1pt);\filldraw[black] (7,5) circle (1pt);\filldraw[black] (7.5,5) circle (1pt);\filldraw[black] (8,5) circle (1pt);\filldraw[black] (8.5,5) circle (1pt);\filldraw[black] (9,5) circle (1pt);\filldraw[black] (9.5,5) circle (1pt);\filldraw[black] (10,5) circle (1pt);\filldraw[black] (10.5,5) circle (1pt);\filldraw[black] (11,5) circle (1pt);\filldraw[black] (11.5,5) circle (1pt);\filldraw[black] (12,5) circle (1pt);\filldraw[black] (12.5,5) circle (1pt);\filldraw[black] (13,5) circle (1pt);\filldraw[black] (13.5,5) circle (1pt);\filldraw[black] (14,5) circle (1pt);

\filldraw[black] (0.5,5.5) circle (1pt);\filldraw[black] (1,5.5) circle (1pt);\filldraw[black] (1.5,5.5) circle (1pt);\filldraw[black] (2,5.5) circle (1pt);\filldraw[black] (2.5,5.5) circle (1pt);\filldraw[black] (3,5.5) circle (1pt);\filldraw[black] (3.5,5.5) circle (1pt);\filldraw[black] (4,5.5) circle (1pt);\filldraw[black] (4.5,5.5) circle (1pt);\filldraw[black] (5,5.5) circle (1pt);\filldraw[black] (5.5,5.5) circle (1pt);\filldraw[black] (6,5.5) circle (1pt);\filldraw[black] (6.5,5.5) circle (1pt);\filldraw[black] (7,5.5) circle (1pt);\filldraw[black] (7.5,5.5) circle (1pt);\filldraw[black] (8,5.5) circle (1pt);\filldraw[black] (8.5,5.5) circle (1pt);\filldraw[black] (9,5.5) circle (1pt);\filldraw[black] (9.5,5.5) circle (1pt);\filldraw[black] (10,5.5) circle (1pt);\filldraw[black] (10.5,5.5) circle (1pt);\filldraw[black] (11,5.5) circle (1pt);\filldraw[black] (11.5,5.5) circle (1pt);\filldraw[black] (12,5.5) circle (1pt);\filldraw[black] (12.5,5.5) circle (1pt);\filldraw[black] (13,5.5) circle (1pt);\filldraw[black] (13.5,5.5) circle (1pt);\filldraw[black] (14,5.5) circle (1pt);

\filldraw[black] (0.5,6) circle (1pt);\filldraw[black] (1,6) circle (1pt);\filldraw[black] (1.5,6) circle (1pt);\filldraw[black] (2,6) circle (1pt);\filldraw[black] (2.5,6) circle (1pt);\filldraw[black] (3,6) circle (1pt);\filldraw[black] (3.5,6) circle (1pt);\filldraw[black] (4,6) circle (1pt);\filldraw[black] (4.5,6) circle (1pt);\filldraw[black] (5,6) circle (1pt);\filldraw[black] (5.5,6) circle (1pt);\filldraw[black] (6,6) circle (1pt);\filldraw[black] (6.5,6) circle (1pt);\filldraw[black] (7,6) circle (1pt);\filldraw[black] (7.5,6) circle (1pt);\filldraw[black] (8,6) circle (1pt);\filldraw[black] (8.5,6) circle (1pt);\filldraw[black] (9,6) circle (1pt);\filldraw[black] (9.5,6) circle (1pt);\filldraw[black] (10,6) circle (1pt);\filldraw[black] (10.5,6) circle (1pt);\filldraw[black] (11,6) circle (1pt);\filldraw[black] (11.5,6) circle (1pt);\filldraw[black] (12,6) circle (1pt);\filldraw[black] (12.5,6) circle (1pt);\filldraw[black] (13,6) circle (1pt);\filldraw[black] (13.5,6) circle (1pt);\filldraw[black] (14,6) circle (1pt);

\filldraw[black] (0.5,6.5) circle (1pt);\filldraw[black] (1,6.5) circle (1pt);\filldraw[black] (1.5,6.5) circle (1pt);\filldraw[black] (2,6.5) circle (1pt);\filldraw[black] (2.5,6.5) circle (1pt);\filldraw[yellow] (3,6.5) circle (1.5pt);\filldraw[black] (3.5,6.5) circle (1pt);\filldraw[black] (4,6.5) circle (1pt);\filldraw[black] (4.5,6.5) circle (1pt);\filldraw[black] (5,6.5) circle (1pt);\filldraw[black] (5.5,6.5) circle (1pt);\filldraw[black] (6,6.5) circle (1pt);\filldraw[black] (6.5,6.5) circle (1pt);\filldraw[black] (7,6.5) circle (1pt);\filldraw[black] (7.5,6.5) circle (1pt);\filldraw[black] (8,6.5) circle (1pt);\filldraw[black] (8.5,6.5) circle (1pt);\filldraw[black] (9,6.5) circle (1pt);\filldraw[black] (9.5,6.5) circle (1pt);\filldraw[black] (10,6.5) circle (1pt);\filldraw[black] (10.5,6.5) circle (1pt);\filldraw[black] (11,6.5) circle (1pt);\filldraw[black] (11.5,6.5) circle (1pt);\filldraw[black] (12,6.5) circle (1pt);\filldraw[black] (12.5,6.5) circle (1pt);\filldraw[black] (13,6.5) circle (1pt);\filldraw[black] (13.5,6.5) circle (1pt);\filldraw[black] (14,6.5) circle (1pt);

\filldraw[black] (0.5,7) circle (1pt);\filldraw[black] (1,7) circle (1pt);\filldraw[black] (1.5,7) circle (1pt);\filldraw[black] (2,7) circle (1pt);\filldraw[black] (2.5,7) circle (1pt);\filldraw[black] (3,7) circle (1pt);\filldraw[black] (3.5,7) circle (1pt);\filldraw[black] (4,7) circle (1pt);\filldraw[black] (4.5,7) circle (1pt);\filldraw[black] (5,7) circle (1pt);\filldraw[black] (5.5,7) circle (1pt);\filldraw[black] (6,7) circle (1pt);\filldraw[black] (6.5,7) circle (1pt);\filldraw[black] (7,7) circle (1pt);\filldraw[black] (7.5,7) circle (1pt);\filldraw[black] (8,7) circle (1pt);\filldraw[black] (8.5,7) circle (1pt);\filldraw[black] (9,7) circle (1pt);\filldraw[black] (9.5,7) circle (1pt);\filldraw[black] (10,7) circle (1pt);\filldraw[black] (10.5,7) circle (1pt);\filldraw[black] (11,7) circle (1pt);\filldraw[black] (11.5,7) circle (1pt);\filldraw[black] (12,7) circle (1pt);\filldraw[black] (12.5,7) circle (1pt);\filldraw[black] (13,7) circle (1pt);\filldraw[black] (13.5,7) circle (1pt);\filldraw[black] (14,7) circle (1pt);

\filldraw[black] (0.5,7.5) circle (1pt);\filldraw[black] (1,7.5) circle (1pt);\filldraw[black] (1.5,7.5) circle (1pt);\filldraw[black] (2,7.5) circle (1pt);\filldraw[black] (2.5,7.5) circle (1pt);\filldraw[black] (3,7.5) circle (1pt);\filldraw[black] (3.5,7.5) circle (1pt);\filldraw[black] (4,7.5) circle (1pt);\filldraw[black] (4.5,7.5) circle (1pt);\filldraw[black] (5,7.5) circle (1pt);\filldraw[black] (5.5,7.5) circle (1pt);\filldraw[black] (6,7.5) circle (1pt);\filldraw[black] (6.5,7.5) circle (1pt);\filldraw[black] (7,7.5) circle (1pt);\filldraw[black] (7.5,7.5) circle (1pt);\filldraw[black] (8,7.5) circle (1pt);\filldraw[black] (8.5,7.5) circle (1pt);\filldraw[black] (9,7.5) circle (1pt);\filldraw[black] (9.5,7.5) circle (1pt);\filldraw[black] (10,7.5) circle (1pt);\filldraw[black] (10.5,7.5) circle (1pt);\filldraw[black] (11,7.5) circle (1pt);\filldraw[black] (11.5,7.5) circle (1pt);\filldraw[black] (12,7.5) circle (1pt);\filldraw[black] (12.5,7.5) circle (1pt);\filldraw[black] (13,7.5) circle (1pt);\filldraw[black] (13.5,7.5) circle (1pt);\filldraw[black] (14,7.5) circle (1pt);

\filldraw[black] (0.5,8) circle (1pt);\filldraw[black] (1,8) circle (1pt);\filldraw[black] (1.5,8) circle (1pt);\filldraw[black] (2,8) circle (1pt);\filldraw[black] (2.5,8) circle (1pt);\filldraw[black] (3,8) circle (1pt);\filldraw[black] (3.5,8) circle (1pt);\filldraw[black] (4,8) circle (1pt);\filldraw[black] (4.5,8) circle (1pt);\filldraw[black] (5,8) circle (1pt);\filldraw[black] (5.5,8) circle (1pt);\filldraw[black] (6,8) circle (1pt);\filldraw[black] (6.5,8) circle (1pt);\filldraw[black] (7,8) circle (1pt);\filldraw[black] (7.5,8) circle (1pt);\filldraw[black] (8,8) circle (1pt);\filldraw[black] (8.5,8) circle (1pt);\filldraw[black] (9,8) circle (1pt);\filldraw[black] (9.5,8) circle (1pt);\filldraw[black] (10,8) circle (1pt);\filldraw[black] (10.5,8) circle (1pt);\filldraw[black] (11,8) circle (1pt);\filldraw[black] (11.5,8) circle (1pt);\filldraw[black] (12,8) circle (1pt);\filldraw[black] (12.5,8) circle (1pt);\filldraw[black] (13,8) circle (1pt);\filldraw[black] (13.5,8) circle (1pt);\filldraw[black] (14,8) circle (1pt);

\filldraw[black] (0.5,1) circle (1pt);\filldraw[black] (1.5,1) circle (1pt);\filldraw[black] (2.5,1) circle (1pt);\filldraw[black] (3.5,1) circle (1pt);\filldraw[black] (3.5,1) circle (1pt);\filldraw[black] (4.5,1) circle (1pt);\filldraw[black] (5.5,1) circle (1pt);\filldraw[black];\filldraw[black] (6.5,1) circle (1pt);\filldraw[black] (7,1) circle (1pt);\filldraw[black] (7.5,1) circle (1pt);\filldraw[black] (8,1) circle (1pt);\filldraw[black] (8.5,1) circle (1pt);\filldraw[black] (9,1) circle (1pt);\filldraw[black] (9.5,1) circle (1pt);\filldraw[black] (10,1) circle (1pt);\filldraw[black] (10.5,1) circle (1pt);\filldraw[black] (11,1) circle (1pt);\filldraw[black] (11.5,1) circle (1pt);\filldraw[black] (12,1) circle (1pt);\filldraw[black] (12.5,1) circle (1pt);\filldraw[black] (13,1) circle (1pt);\filldraw[black] (13.5,1) circle (1pt);\filldraw[black] (14,1) circle (1pt);

\filldraw[black] (0,0) circle (1pt);
\filldraw[black] (0,0.5) circle (1pt);
\filldraw[black] (0,1.5) circle (1pt);
\filldraw[black] (0,2.5) circle (1pt);
\filldraw[black] (0,3.5) circle (1pt);
\filldraw[black] (0,4.5) circle (1pt);
\filldraw[black] (0,5.5) circle (1pt);
\filldraw[black] (0,6.5) circle (1pt);
\filldraw[black] (0,7) circle (1pt);
\filldraw[black] (0,7.5) circle (1pt);
\filldraw[black] (0,8) circle (1pt);

\filldraw[black] (0.5,0) circle (1pt);
\filldraw[black] (1,0) circle (1pt);
\filldraw[black] (1.5,0) circle (1pt);
\filldraw[black] (2,0) circle (1pt);
\filldraw[black] (2.5,0) circle (1pt);
\filldraw[black] (3,0) circle (1pt);
\filldraw[black] (3.5,0) circle (1pt);

\filldraw[black] (0,1) circle (1pt);
\filldraw[black] (0,2) circle (1pt);
\filldraw[black] (0,3) circle (1pt);
\filldraw[black] (1,1) circle (1pt);
\filldraw[black] (2,1) circle (1pt);
\filldraw[black] (3,1) circle (1pt);
\filldraw[black] (1,2) circle (1pt);
\filldraw[black] (1,3) circle (1pt);

\node at (-0.4,0.5)[auto,scale=1] {\small $1$};
\node at (-0.4,2)[auto,scale=1] {\small $d_2^+$};
\node at (-0.2,6.48)[auto,scale=1]{\small $\leftarrow$};
\node at (-1.6,6.5)[auto,scale=1]{\small $-\frac{c+b_1d_1^+}{ad_1^+ +b_2}$};

\node at (-0.1,-0.4)[auto,scale=1] {\small $\bff{0}$};
\node at (0.5,-0.4)[auto,scale=1] {\small $1$};
\node at (3,-0.4)[auto,scale=1] {\small $d_1^+$};
\node at (12,-0.2)[auto,scale=1]{\small $\downarrow$};
\node at (12,-0.9)[auto,scale=1]{\small $-\frac{c+b_2d_2^+}{ad_2^+ +b_1}$};

\begin{axis}
  [ scale=1.5,
    axis lines= none,
    ymin=0, ymax=16,
    xmin=0, xmax=28,
    xtick={0,6,24,28},
    ytick={0,4,13,16},
    width=109.2mm,height=69.1mm,
    xlabel=$x$,
    ylabel={$f(x) = x^2 - x +4$}
  ]
  \addplot [
    line width=1.2pt,
    domain=5.8:28, 
    samples=200, 
    color=blue,
    ]
    {(7*x - 16) / (2*x - 10)}; 
  \end{axis}
\end{tikzpicture}
\label{fig2}
\caption{The blue line represents the curve $f=2x_1x_2-7x_1-10x_1+16=0$ for $x_1 \geq 5.8$ and the yellow points are the elements in $\min\mathcal{D}_{\geq 0}(f)$.}
\end{figure}
We recall that if $f$ is a square-free dominated polynomial without any arithmetical structure, then this does not implies that $f=0$ has not integer solutions. 

\begin{Example}\label{notallsoltns}
Let $g=x_1x_2+17x_1-12x_2+27$. 
By Lemma \ref{lemmapol} we have that
\[
\min\mathcal{D}_{\geq 0}(g)=\{(13,1) \}.
\]
On the other hand, since $g(13,1)=249$, then $\mathcal{D}(g)=\emptyset$.
Nevertheless $g=0$ has sixteen different solutions in $\mathbb{Z}^2$. 
Moreover four of them are solutions in $\mathbb{N}_+^2$, namely
\[
\{(1,4),(5,16),(9,60),(11,214) \}.
\]
None of them found by the algorithm. 
Because the condition of having all non-constant coefficients positive is not fulfilled by any of them. 
For instance note that $f(x_1+11,x_2+214)=x_1x_2+231x_1-x_2$.
\end{Example}

Defining an $r$-arithmetical structure of an integer square-free dominated polynomial is a more difficult task.
For one hand, the $r$-arithmetical structures on $L$ and $L^t$ are equal if and only if $L$ is symmetric.
And for the other hand, $f_L(X)=f_{L^t}(X)$ for any $L\in M_n(\mathbb{Z})$ because the determinant of a matrix is invariant under the transpose, that is, $\det(L)=\det(L^t)$.
Moreover, if $M$ is a matrix without rows or columns equal to zero, then $\mathcal{D}(L)=\mathcal{D}(L^t)$.
That is, the polynomial $f_L(X)$ does not distinguish between $L$ and $L^t$.
However $r$-arithmetical structures on $L$ and $L^t$ are not equal when $L$ is not symmetric.
Therefore in general we may not try to extract the information of the $r$-arithmetical structures from $f_L(X)$.
Next example illustrate previous discussion.
 
\begin{Example}{\label{transpose}}
If $L=\begin{pmatrix}
0 & 1\\
3 & 0
\end{pmatrix}$, then $f_L(x_1,x_2)=f_{L^t}(x_1,x_2)=x_1x_2-3$ and therefore 
\[
\mathcal{A}(L)=\{ ((1,3),(1,1)),((3,1),(1,3)) \} \text{ and } \mathcal{A}(L^t)=\{ ((1,3),(3,1)),((3,1),(1,1)) \}.
\]
Thus $\mathcal{D}(f_L)=\{ (1,3),(3,1) \}=\mathcal{D}(f_{L^t})$ and $\mathcal{R}(f_L)=\{ (1,1),(1,3)\}\neq \{ (1,1),(3,1)\}=\mathcal{R}(f_{L^t})$.
\end{Example}

\begin{Remark}
If $f$ is an irreducible polynomial which is the determinant of a matrix with variables in the diagonal irreducible, then it comes from an irreducible matrix.
\end{Remark}

Since a symmetric $Z$-matrix $M$ is an almost non-singular $M$-matrix with $\det(M)=0$ if and only if there exists $\bff{r}>0$ such that 
\[ 
Adj(M)=\,|K(M)|\,\bff{r}^t \bff{r} > \bff{0},
\]
where $\ker_{\mathbb{Q}}(M)=\langle \bff{r} \rangle$ and $K(M)$ is the critical group of $M$, see \cite[Proposition 3.4]{arithmetical}.
Then is factible to define the critical group of a $d$-arithmetical structure of a polynomial $f$ as
\[
|K(f,\bff{d})|=\gcd(\mathrm{coef}_{f_{\mathbf{d}}(X)}(x_1),\ldots,\mathrm{coef}_{f_{\mathbf{d}}(X)}(x_n)).
\]

Given any non-negative matrix with zero diagonal $L$ such that every of its rows are different from $\mathbf{0}$, then $(L\mathbf{1},\mathbf{1})$ is the canonical arithmetical structure of $L$. 
In general for polynomials in $\mathbb{Z}[X]$ we can not recover the concept of canonical arithmetical structure. 
Furthermore, some polynomials are extremal in the sense that they have very few arithmetical structures. 
We illustrate this idea at the next example.

\begin{Example}\label{p23}
If $g=x_1x_2x_3-19x_1+2x_2+3x_3+b$, then
\[
b=\frac{-114}{n}-n\text{ where } n\in\mathrm{Div}(114)=\pm\{1,2,3,6,19,38,57,114 \}.
\]
Which implies that $b\in \pm\{25,41,59,115 \}$.
It is not difficult to check by Proposition~\cite[Proposition 3.7]{algorithmic} that $f(x_1,x_2,x_3)=x_1x_2x_3-19x_1+2x_2+3x_3-23$ is not the determinant of a matrix with variables in the diagonal.
Evaluating, it is easy to see that $(d_1,d_2,d_3)\in \mathbb{N}_+^3$ is an arithmetical structure of $f$ if and only if
\[
d_2d_3-19\geq 1 \text{ and }(d_2d_3-19)d_1+2d_2+3d_3=23.
\]
Thus we have that $\mathcal{D}(f)=\{ (1,5,4) \}$.
A follow up problem would be to study this type of polynomials, where we have a single d-arithmetical structure.
\end{Example}




\end{document}